\title{A simple proof on the inequality of arithmetic and geometric means}
\author{Haoxiang Lin}
\chardef\bslash=`\\ % p. 424, TeXbook
\theoremstyle{definition}
\theoremstyle{remark}
\newcommand{\thmref}[1]{Theorem~\ref{#1}}
\newcommand{\eval}[2][\right]{\relax
  \ifx#1\right\relax \left.\fi#2#1\rvert}
\theoremstyle{definition}
\newtheorem{theorem}{Theorem}[section]
\theoremstyle{definition}
\newtheorem{definition}[theorem]{Definition}
\theoremstyle{remark}
\numberwithin{equation}{section}
\begin{document}
\date{}
\maketitle
%\markboth{Sample paper for the {\protect\ntt\lowercase{amsmath}} package}
%{Sample paper for the {\protect\ntt\lowercase{amsmath}} package}
%\renewcommand{\sectionmark}[1]{}

\begin{abstract}

In this short paper we show that the inequality of arithmetic and geometric means is reduced to another interesting inequality,
and a proof is provided.

\end{abstract}

\section{Introduction}

Given $n$ arbitrary real numbers $a_1, a_2, \cdots, a_n$, we define their arithmetic and geometric means as following:
\begin{definition}
The \textit{Arithmetic Mean} is:
\begin{equation}
AM(a_1, a_2, \cdots, a_n) = \frac{a_1 + a_2 + \cdots + a_n}{n}
\end{equation}
\end{definition}

\begin{definition}
If such $n$ real numbers are all non-negative, the \textit{Geometric Mean} is:
\begin{equation}
GM(a_1, a_2, \cdots, a_n) = \sqrt[\leftroot{-2}\uproot{2}n]{a_1 \cdot a_2 \cdots a_n}
\end{equation}
\end{definition}

The inequality of arithmetic and geometric means states that the arithmetic mean is greater than or equal to the geometric mean
if those real numbers are all \emph{positive}:
\begin{theorem}\label{agm}
For arbitrary $n$ positive real numbers $a_1, a_2, \cdots, a_n$, the inequality
\begin{equation}
\frac{a_1 + a_2 + \cdots + a_n}{n} \geq \sqrt[\leftroot{-2}\uproot{2}n]{a_1 \cdot a_2 \cdots a_n}
\end{equation}
holds, with equality if and only if $a_1 = a_2 = \cdots = a_n$
\end{theorem}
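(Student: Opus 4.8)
The plan is to reduce Theorem~\ref{agm} to the normalized statement that a collection of positive reals whose product equals $1$ has sum at least $n$, and then to prove that reduced inequality by induction. First I would set $g = \sqrt[n]{a_1 \cdots a_n}$ and put $x_i = a_i/g$ for each $i$; since $g > 0$ this substitution is legitimate, and it yields $\prod_{i=1}^n x_i = 1$. Dividing the desired inequality through by $g$ shows that Theorem~\ref{agm} is equivalent to the claim
\begin{equation}
x_1 x_2 \cdots x_n = 1 \implies x_1 + x_2 + \cdots + x_n \geq n,
\end{equation}
with equality exactly when every $x_i = 1$. Under this normalization the theorem's condition $a_1 = \cdots = a_n$ corresponds precisely to $x_1 = \cdots = x_n = 1$. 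This is the interesting inequality to which the problem reduces.

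To prove the reduced inequality I would induct on $n$. The base case $n = 1$ is immediate. For the inductive step, suppose the claim holds for $n$ factors and take positive reals $x_1, \ldots, x_{n+1}$ with product $1$. If all of them equal $1$ there is nothing to prove; otherwise, because their product is $1$, at least one factor is $\geq 1$ and at least one is $\leq 1$, and after relabelling I may assume $x_n \geq 1 \geq x_{n+1}$. The key move is to merge these two into a single factor $y = x_n x_{n+1}$: the $n$ numbers $x_1, \ldots, x_{n-1}, y$ still have product $1$, so the induction hypothesis gives $x_1 + \cdots + x_{n-1} + y \geq n$.

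It then remains to compare $x_n + x_{n+1}$ with $y + 1$. The difference factors as
\begin{equation}
(x_n + x_{n+1}) - (x_n x_{n+1} + 1) = -(x_n - 1)(x_{n+1} - 1) \geq 0,
\end{equation}
where the sign follows from $x_n \geq 1$ and $x_{n+1} \leq 1$. Adding this bound to the inductive estimate yields $x_1 + \cdots + x_{n+1} \geq n + 1$, which closes the induction.

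The step I expect to require the most care is the equality analysis. Equality forces both the inductive estimate and the factored difference to be tight simultaneously, so that $(x_n - 1)(x_{n+1} - 1) = 0$ holds together with $x_1 = \cdots = x_{n-1} = y = 1$; chasing these conditions back through the product constraint then pins down $x_1 = \cdots = x_{n+1} = 1$. Finally, propagating this conclusion through the normalization $x_i = a_i/g$ recovers the equality condition $a_1 = \cdots = a_n$ of the theorem, completing the argument.
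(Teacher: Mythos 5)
Your proof is correct, but it takes a genuinely different route from the paper's --- in fact the multiplicative dual of it. The paper normalizes additively: it sets $x = \frac{a_1+\cdots+a_n}{n}$ and $d_i = a_i - x$, so that $\sum_{i=1}^n d_i = 0$, and reduces Theorem~\ref{agm} to Theorem~\ref{lin}: $x^n \geq (x+d_1)\cdots(x+d_n)$ whenever $x>0$, each $d_i > -x$, and $\sum_{i=1}^n d_i = 0$. Its induction merges a positive deviation $d_{k+1}$ with a negative one $d_k$ into the single deviation $d_k+d_{k+1}$ (preserving the zero-sum constraint), and the key step is $(x+d_k)(x+d_{k+1}) = x(x+d_k+d_{k+1}) + d_k d_{k+1} < x(x+d_k+d_{k+1})$ since $d_k d_{k+1}<0$. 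You normalize multiplicatively: dividing by the geometric mean reduces the theorem to the classical lemma that positive reals with product $1$ have sum at least $n$, and your induction merges two ratios straddling $1$ into their product (preserving the product-one constraint), with key step $x_n + x_{n+1} \geq x_n x_{n+1}+1$. Under the substitution $x_i = (x+d_i)/x$ your key step is literally the paper's key step divided by $x^2$, so the two inductions are structurally parallel; what differs is which mean is held fixed (you fix the geometric mean and bound the sum from below; the paper fixes the arithmetic mean and bounds the product from above) and hence which auxiliary inequality carries the induction. Yours is the textbook-standard argument; the paper's auxiliary inequality is its advertised contribution, and its reduction has the small advantage of involving no $n$-th root. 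One point to tighten in your write-up: when the $x_i$ are not all $1$, you should observe that a maximal and a minimal element give two \emph{distinct} indices with $x_n > 1 > x_{n+1}$ strictly; this both justifies the relabelling (your phrase ``at least one factor is $\geq 1$ and at least one is $\leq 1$'' could in principle point at the same index) and yields the strict inequality that makes the equality analysis immediate.
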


The inequality of arithmetic and geometric means is so famous that there are various proofs in the literature
~\cite{Wiki, Bencze98, Chong76, Miller03, Uchida08, Xia99}.
In this short paper we provide a simple proof which uses another interesting inequality.

\section{Proof}

If we use the following notations:
\begin{align*}
& x = \frac{a_1 + a_2 + \cdots + a_n}{n}\\
& d_i = a_i - x
\end{align*}

We see that $x > 0, x + d_i = a_i > 0, \sum_{i=1}^{n} d_i = 0$
and $a_1 = a_2 = \cdots = a_n$ implies that $d_1 = d_2 = \cdots = d_n = 0$.
Then \thmref{agm} is actually reduced to the following inequality:
\begin{theorem}\label{lin}
Let $x$ be a positive real number, $d_1, d_2, \cdots, d_n$ be real numbers that
each $ d_i > -x$ and $\sum_{i=1}^{n} d_i = 0$, then
\begin{equation}\label{lin1}
x \geq \sqrt[\leftroot{-2}\uproot{2}n]{(x + d_1) \cdot (x + d_2) \cdots (x + d_n)}
\end{equation}
\begin{equation}\label{lin2}
x^n \geq (x + d_1) \cdot (x + d_2) \cdots (x + d_n)
\end{equation}
Both equalities hold if and only if $d_1 = d_2 = \cdots = d_n = 0$.
\end{theorem}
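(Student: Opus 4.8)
The plan is to prove \thmref{lin} directly; \thmref{agm} then follows via the reduction already recorded before the statement. First I would observe that \eqref{lin1} and \eqref{lin2} are equivalent and may be treated together: every factor $x + d_i$ is strictly positive because $d_i > -x$, so both sides of \eqref{lin2} are positive and applying the (monotone) $n$-th root moves between the two forms without altering the equality case. Hence it suffices to establish \eqref{lin2}.

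Next I would normalize away the parameter $x$. Setting $t_i = d_i / x$, the hypotheses become $t_i > -1$ and $\sum_{i=1}^n t_i = 0$, while dividing \eqref{lin2} by $x^n > 0$ turns it into the scale-free claim $\prod_{i=1}^n (1 + t_i) \leq 1$. This isolates the genuine content: a product of positive numbers $1 + t_i$ whose deviations $t_i$ sum to zero cannot exceed $1$.

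The tool I would invoke — the ``interesting inequality'' promised in the abstract — is the elementary bound
\begin{equation*}
1 + t \leq e^{t} \quad \text{for all real } t, \text{ with equality iff } t = 0,
\end{equation*}
which I would justify in one line from convexity of $\exp$ (its tangent line at the origin) or from the sign of $e^{t} - 1 - t$; note this is proved independently of AM--GM, so there is no circularity. Applying it to each $t_i$ and multiplying the resulting positive inequalities yields
\begin{equation*}
\prod_{i=1}^n (1 + t_i) \leq \prod_{i=1}^n e^{t_i} = e^{\sum_{i=1}^n t_i} = e^{0} = 1,
\end{equation*}
which is precisely the normalized \eqref{lin2}. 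The same step can be phrased additively through $\ln(1 + t) \leq t$, summing instead of multiplying.

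The step I expect to require the most care is the equality analysis rather than the main inequality. Since each factor $1 + t_i$ is strictly positive, a single strict inequality $1 + t_j < e^{t_j}$ propagates to a strict inequality for the entire product; therefore equality in \eqref{lin2} forces $t_i = 0$, that is $d_i = 0$, for every $i$, and conversely $d_1 = \cdots = d_n = 0$ gives equality trivially. The only subtlety is tracking strictness cleanly through the product, which is unobstructed precisely because the hypothesis $d_i > -x$ guarantees positivity of every factor; beyond this bookkeeping I anticipate no real difficulty.
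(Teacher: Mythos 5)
Your proof is correct, but it follows a genuinely different route from the paper's. The paper proves \eqref{lin2} by induction on $n$: if the deviations are not all zero, it picks a positive one ($d_{k+1}$, say) and a negative one ($d_k$), merges them into the single deviation $d_k+d_{k+1}$ (which still satisfies the hypotheses, since $d_k+d_{k+1} > d_k > -x$ and the sum remains zero), applies the inductive hypothesis to those $k$ numbers, and then recovers the $(k+1)$-fold product from the expansion $(x+d_k)(x+d_{k+1}) = x^2+(d_k+d_{k+1})\cdot x+d_k\cdot d_{k+1}$; the cross term $d_k\cdot d_{k+1}<0$ is precisely what yields the strict inequality. You instead normalize to $\prod_{i=1}^n(1+t_i)\leq 1$ with $\sum_{i=1}^n t_i=0$ and apply the tangent-line bound $1+t\leq e^t$, which is P\'olya's classical proof of AM--GM in lightly disguised form. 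The trade-off: your argument is shorter, avoids induction, and makes the equality case immediate (one nonzero $t_j$ forces strictness through a product of positive factors), but it imports the exponential function together with its convexity or derivative properties; the paper's argument is purely algebraic, using nothing beyond ordered-field arithmetic of the real numbers, which is evidently the point of a paper advertising a simple elementary proof. Your justification of $1+t\leq e^t$ is sound and non-circular, though in a fully self-contained write-up you would need to state which development of $\exp$ you take as given.
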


\begin{proof}
Since $x$ and $x + d_i$ are positive real numbers, \eqref{lin1} is equivalent to \eqref{lin2}.
We prove the second inequality using induction on $n$.
\begin{enumerate}
\item When $n = 1$ and $2$, it is easy to verify the correctness.

\item Suppose that when $n = k(\geq 2)$, the inequality is true. That is
\begin{equation*}
x^k \geq (x + d_1) \cdot (x + d_2) \cdots (x + d_k)
\end{equation*}

\item Now assume $n = k+1$.
If $d_1 = d_2 = \cdots = d_{k+1} = 0$, the equality is trivially true.
Otherwise suppose that $d_1, d_2, \cdots, d_{k+1}$ are not all zero.
Then there must be one $d_u > 0$, one $d_v < 0$, and $u \neq v$ since $n > 2$.
Without loss of generality, assume that $d_{k+1} > 0$ and $d_k < 0$.
Since $d_k > -x$, it is obviously true that $d_k + d_{k+1} > -x$.
It is clear that $x, d_1, d_2, \cdots, d_{k-1}, (d_k + d_{k+1})$ also meet the prerequisites of the inequality, therefore
\begin{equation*}
x^k \geq (x + d_1) \cdot (x + d_2) \cdots (x + (d_k + d_{k+1}))
\end{equation*}
And we get
\begin{align*}
x^{k+1} & \geq (x + d_1) \cdot (x + d_2) \cdots (x + (d_k + d_{k+1})) \cdot x\\
& = (x + d_1) \cdot (x + d_2) \cdots (x^2 + (d_k + d_{k+1}) \cdot x)\\
& > (x + d_1) \cdot (x + d_2) \cdots (x^2 + (d_k + d_{k+1}) \cdot x + d_k \cdot d_{k+1}) \quad \text{since $d_k \cdot d_{k+1} < 0$}\\
& = (x + d_1) \cdot (x + d_2) \cdots (x + d_k) \cdot (x + d_{k+1})
\end{align*}
\end{enumerate}

Now we complete the proof, and the equality holds if and only if $d_1 = d_2 = \cdots = d_n = 0$.
\end{proof}

\end{document}